\newcommand{\ol}{\overline}
\newtheorem{theorem}{Theorem}[section]
\newtheorem{proposition}[theorem]{Proposition}
\newtheorem{lemma}[theorem]{Lemma}
\newtheorem{definition}[theorem]{Definition}
\newtheorem{corollary}[theorem]{Corollary}
\newcommand{\ga}{\alpha}	\newcommand{\gb}{\beta}
\newcommand{\bp}{\begin{proposition}}
\newcommand{\ep}{\end{proposition}}
\newcommand{\bl}{\begin{lemma}}
\newcommand{\el}{\end{lemma}}
\newcommand{\bt}{\begin{theorem}}
\newcommand{\et}{\end{theorem}}
\newcommand{\bc}{\begin{corollary}}
\newcommand{\ec}{\end{corollary}}
\newcommand{\bd}{\begin{definition}}
\newcommand{\ed}{\end{definition}}
\newtheorem{conjecture}[theorem]{Conjecture}
\newtheorem{question}[theorem]{Question}
\def\rmk{\refstepcounter{theorem}\paragraph{{\bf Remark} \thetheorem}}
\def\proof{\paragraph{Proof}}
\newcommand{\remark}{\rmk}
\newcommand{\bco}{\begin{conjecture}}
\newcommand{\eco}{\end{conjecture}}
\title{On set-theoretic complete intersections for smooth  curves in three-dimensional affine schemes}
\author{Lisa Mandal} \address{Department of Mathematics and Statistics, Indian Institute of Science Education and Research Kolkata, Mohanpur, 741246, India.}
\email{lm23rs066@iiserkol.ac.in}  
 \author{Md. Ali Zinna} \address{Department of Mathematics and Statistics, Indian Institute of Science Education and Research Kolkata, Mohanpur, 741246, India.}
\email{zinna@iiserkol.ac.in,zinna2012@gmail.com}
\subjclass[2000]{14M10, 13C10} 
\keywords{Complete intersection, Set-theoretic complete intersection, Local complete intersection}
\begin{document}
\maketitle

\section{Abstract}
We prove that every local complete intersection curve in $\operatorname{Spec}(A)$, where $A$ is a commutative Noetherian ring of dimension three, is a set-theoretic complete intersection. An analogous result is established for local complete intersection surfaces when $A$ is a four-dimensional affine algebra over the algebraic closure of a finite field of $p$ elements.
Furthermore, we show that any local complete intersection curve  (respectively, surface) in ${\rm Spec}(A)$, where  
$A$ has dimension three (respectively,  four), having trivial  conormal bundle is, in fact,  a complete intersection.

\section{Introduction}
The study of algebraic varieties and their defining equations lies at the heart of algebraic geometry. A fundamental question in this area is to determine when a given variety can be expressed as the set-theoretic intersection of a prescribed number of hypersurfaces. Instead of  asking whether the defining ideal  $I(V)$ can be generated by a specific number of elements, one considers whether there exists an ideal $J$,  generated by the desired number of elements,  whose radical coincides with that of $I(V)$. This naturally leads to the following notion:

An ideal $I$ in a commutative Noetherian ring $A$ is said to be {\it set-theoretically generated} by $r$ elements if there exist  $f_1,\cdots,f_r\in A$ such that  $\sqrt{I}=\sqrt{(f_1,\cdots,f_r)}$. 

The study of set-theoretic generation has its roots in a classical result of Kronecker (\cite{kr}, 1882), who showed  that any ideal $I$ in a commutative Noetherian ring $A$ of dimension $n$ is set-theoretically generated by $n+1 \,\,  (=\text{dim}(A)+1)$ elements. Specifically, for an ideal $I\subset k[x_1,\cdots,x_n]$, where $k$ is a field,  the corresponding variety $V(I)$ can be described as the set-theoretic intersection of $n+1$ hypersurfaces. 

This bound was improved in 1972  by Storch \cite{s} and independently by  Eisenbud and Evans  \cite{ee2}, who proved that any ideal $I\subset A[T]$ is set-theoretically generated by $\text{dim}(A[T])$ elements. Consequently,  the variety $V(I)$ in $\mathbb{A}_k^n$ is the set-theoretic  intersection of $n$ hypersurfaces. 

 In this context, the notion of {\it set-theoretic complete intersection}  becomes central.  An ideal is called a set-theoretic complete intersection if it is set-theoretically generated by a regular sequence whose length equals the height of the ideal. An ideal $I$ in a commutative Noetherian ring $A$ is called a {\it local complete intersection}  if,  for every prime ideal $\mathfrak{p}\supset I$,  $I_{\mathfrak{p}}$ is generated by a regular sequence of length $\text{ht}(I)$.  A variety $V\subset \mathbb{A}_k^n$ is said to be  a local complete intersection if its defining ideal $I(V)$ is a local complete intersection. 
 In 1975, Murthy  \cite[p. 206]{mu0} posed the following fundamental question regarding the relationship between local complete intersection varieties and set-theoretic complete intersections:

\begin{question}
Let  $V\subset \mathbb{A}_k^n$ be a variety which is a local  complete intersection (for example,  if  $V$ is smooth).  Is $V$  a set-theoretic complete intersection?
\end{question}

This question is motivated by a result of Ferrand  \cite{fe}, which asserts  that  any local complete intersection curve $\mathcal{C}\subset \mathbb{A}_k^3$ is a  set-theoretic complete intersection. 
 Bloch-Murthy-Szpiro  \cite{bms}  showed that if  $k$ is the algebraic closure of a finite field, then  any local complete intersection surface $\mathcal{S}\subset \mathbb{A}_k^4$ is a set-theoretic complete intersection.  

For curves, an affirmative answer to Murthy's question was given by Mohan Kumar \cite[Corollary 5]{mk0}, who established that every local complete intersection curve in $\mathbb{A}_k^n$ is a set-theoretic complete intersection. 

In this article, we continue the study of set-theoretic complete intersections in arbitrary commutative Noetherian rings, with particular emphasis on local complete intersection curves in three-dimensional affine schemes.  Our first main result is the following (see Theorem \ref{main1}):

\begin{theorem}\label{main1-int}
Let $A$ be a commutative Noetherian ring of dimension $3$, and  let $I \subset A$ be a local complete intersection ideal of height $2$. Then $I$ is a set-theoretic complete intersection.
\end{theorem}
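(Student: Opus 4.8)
The plan is to use the freedom that only $\sqrt{I}$ matters in order to replace $I$ by an ideal whose conormal module is free, and then to invoke a Boratyński-type efficient-generation principle. First I would note that forming radicals is insensitive to nilpotents, so I may assume $A$ is reduced. Since $I$ is a local complete intersection of height $2$ in a ring of dimension $3$, the quotient $B := A/I$ has dimension at most $1$, and the conormal module $N := I/I^2$ is a projective $B$-module of rank $2$ (locally, $I$ is generated by a length-$2$ regular sequence, so $N$ is locally free of rank $2$ over $B$).

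The first key step exploits that $N$ is a rank-$2$ projective module over the one-dimensional ring $B$. By Serre's splitting theorem, $N$ has a unimodular element, so $N \cong B \oplus L$ where $L = \bigwedge^2 N$ is a line bundle on $\Spec(B)$. I would lift the free generator to an element $f \in I$ whose image in $N$ generates the summand $B$, and use $f$ as the first of the two desired equations. Passing to $\bar{A} := A/(f)$, of dimension at most $2$, the image $\bar{I}$ of $I$ is a height-$1$ local complete intersection whose conormal module is $I/(I^2 + (f)) \cong N/B \cong L$; thus the problem reduces to showing that $\bar{I}$ is set-theoretically principal on the surface $\Spec(\bar{A})$.

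The hard part will be precisely this residual step, since a height-$1$ local complete intersection on a surface need not be set-theoretically principal: the obstruction is the class of $L$ in the Picard group of $\Spec(\bar{A})$. To overcome it I would not take $f$ arbitrarily, but choose the hypersurface in general position so that the residual intersection $V(f) \setminus V(I)$ meets $V(I)$ in a controlled way; equivalently, I would modify $I$ within its radical class to an ideal $J$ with $\sqrt{J} = \sqrt{I}$ whose conormal $J/J^2$ is free. The mechanism I expect to use is an addition/subtraction argument in the spirit of Eisenbud–Evans and Ferrand–Szpiro: one adds a carefully chosen element of $I^2$ to $f$ (a Ferrand-type move) so as to trivialize the determinant $L$, relying on the fact that over the one-dimensional base $B$ the relevant obstruction group is small enough to be killed. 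The genuinely delicate issue is the uniformity of this move over an abstract base, where one cannot invoke genericity over an infinite field and must argue through the module-theoretic splitting directly.

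Once the conormal bundle has been arranged to be free, i.e. $J/J^2$ is generated by $2 = \hh(J)$ elements, I would finish by the Boratyński principle. Generators of $J/J^2$ lift to elements $g_1, g_2 \in J$ with $J = (g_1, g_2) + J^2$, and the standard determinant-trick refinement (replacing $g_2$ by a suitable power, or by $g_2$ plus an element of $J^2$) produces $f_1, f_2$ with $\sqrt{(f_1, f_2)} = \sqrt{J} = \sqrt{I}$. Since $\hh(I) = 2$ and $f_1, f_2$ form a regular sequence up to radical, this exhibits $I$ as a set-theoretic complete intersection.
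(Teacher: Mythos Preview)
Your overall strategy coincides with the paper's at the crucial first step: both reduce, via the Ferrand--Szpiro construction (Theorem~\ref{fs}), to an ideal $J$ with $\sqrt{J}=\sqrt{I}$ whose conormal module $J/J^2$ is free of rank~$2$. Your intermediate discussion (split off a free summand of $I/I^2$, pass to $A/(f)$, and then confront the Picard obstruction $L$) is a detour that you yourself abandon; the clean route, which you eventually identify, is to apply Ferrand--Szpiro at the outset rather than after an ad hoc hypersurface section.

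Where your proposal genuinely diverges from the paper is the \emph{finish}. You invoke a Boratyński-type argument: from $J=(g_1,g_2)+J^2$ deduce $\sqrt{J}=\sqrt{(f_1,f_2)}$ for suitable $f_1,f_2$. This is a legitimate black box (Boratyński, \textit{J.\ Algebra} \textbf{54} (1978)), and with a little extra care about the ``regular sequence'' clause in the paper's definition of set-theoretic complete intersection it does close the argument. The paper, however, takes a different and sharper route. It applies Serre's theorem (Theorem~\ref{serre-1}) to obtain a short exact sequence $0\to A\to P\to J\to 0$ with $P$ projective of rank~$2$; shows, via the associated Koszul complex and Proposition~\ref{regular}, that $\wedge^2 P\simeq A$; and then uses this trivial-determinant information, together with the fact that $J/(c)$ is an invertible $A/(c)$-module for a suitably chosen $c\in J$, to conclude that $J$ is \emph{actually} generated by two elements. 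By Lemma~\ref{fact-1}, $J$ is then an honest complete intersection, not merely a set-theoretic one.

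What each approach buys: your Boratyński finish is shorter and entirely adequate for the stated theorem, but it only yields the set-theoretic conclusion. The paper's projective-module argument is more elaborate, yet it produces the strictly stronger Theorem~\ref{main3} (any height-$2$ local complete intersection ideal with free conormal is a complete intersection), which your method does not see. If you want to recover the paper's full strength, the missing ingredient in your outline is the passage through the rank-$2$ projective $P$ and the verification that its determinant is trivial.
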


This result extends Murthy's question to a broader setting, giving an affirmative answer in dimension three. Moreover, our approach yields the following  result. 

\begin{theorem}
 Let $A$ be a commutative Noetherian ring of dimension $\geq 2$.  Suppose $I\subset A$ is a local complete intersection ideal of height $2$ such that $I/I^2$ is a free $A/I$-module of rank $2$.  Then  $I$ is  a  complete intersection.
\end{theorem}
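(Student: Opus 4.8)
The plan is to manufacture two honest generators of $I$ by lifting a free basis of the conormal module $I/I^2$ and then destroying the spurious component that any such lift introduces. Since $I/I^2$ is free of rank $2$, fix a basis $\bar f_1,\bar f_2$ and lift it to $f_1,f_2\in I$. Put $J=(f_1,f_2)\subseteq I$; the basis property gives $J+I^2=I$, so $I/J=I\cdot(I/J)$, and the determinant form of Nakayama yields $a\in I$ with $(1-a)I\subseteq J$. Setting $e=1-a$ and $K=(J:I)$, one has $eI\subseteq J$, hence $e\in K$ with $e\equiv 1\pmod I$, so $I+K=A$; a short check (using $KI\subseteq J$) gives $J=I\cap K$. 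Thus the first lift realizes the complete intersection $J$ as the intersection of $I$ with a residual ideal $K$ comaximal to it, and the theorem becomes the statement that the lift can be chosen so that $K=A$.

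To remove $K$ I would reduce the codimension. After arranging (by prime avoidance within the coset $f_1+I^2$) that $f_1$ is a nonzerodivisor, pass to $\bar A=A/(f_1)$, where the image $\bar I$ is a height-one ideal that is locally generated by a single nonzerodivisor, hence an invertible ideal, with $\bar A/\bar I\cong A/I$. It suffices to arrange that $\bar I$ be principal: a lift of a generator of $\bar I$, together with $f_1$, then generates $I$, exhibiting it as a complete intersection. Principality is equivalent to the vanishing of the class $[\bar I]\in\operatorname{Pic}(\bar A)$, and this is where the hypothesis enters. Because $\bar f_1$ is part of a free basis, the conormal module one dimension down is $\bar I/\bar I^{2}\cong (I/I^{2})/(A/I)\bar f_1$, which is free of rank one (equivalently $\wedge^{2}(I/I^{2})\cong A/I$ is trivial). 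Hence $[\bar I]$ is trivial \emph{along} $V(I)$, i.e. its restriction under $\operatorname{Pic}(\bar A)\to\operatorname{Pic}(A/I)$ vanishes.

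The crux --- and the step I expect to be hardest --- is to upgrade this restricted triviality to genuine principality of $\bar I$: a locally principal ideal with free conormal need not be principal (already over a Dedekind domain the ideal class can be nonzero), so the free basis alone does not close the argument. I would attack the gap by exploiting the freedom that remains in the construction: varying the lift $f_1$, and hence the hyperplane section $\bar A$, to move the class $[\bar I]$, and subtracting a controlled complete-intersection divisor supported on $V(K)$ away from $V(I)$ --- using comaximality of $K$ and prime avoidance over the finitely many primes minimal over $K$ --- so as to trivialize the class, finally inducting on $\dim A$ through the regular element $f_1$. Equivalently, in the language of obstruction theory this amounts to showing that the Euler-type class attached to the oriented pair $(I,\{\bar f_1,\bar f_2\})$ vanishes once the orientation $\wedge^{2}(I/I^{2})$ has been trivialized, which is the mechanism already underlying the set-theoretic statement of Theorem \ref{main1}.
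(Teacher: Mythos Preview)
Your reduction is the same as the paper's: lift a free basis of $I/I^{2}$ to $f_{1},f_{2}\in I$, observe that $I/(f_{1})$ is an invertible ideal of $\bar A=A/(f_{1})$, and reduce the theorem to showing that this invertible ideal is principal. You then correctly diagnose the difficulty---free conormal of $\bar I$ only says that the class $[\bar I]$ dies in $\operatorname{Pic}(A/I)$, not in $\operatorname{Pic}(\bar A)$---but you do not close the gap. The suggestions in your last paragraph (vary $f_{1}$, subtract a divisor supported on $V(K)$, induct on $\dim A$) do not constitute an argument: there is no evident control over $[\bar I]$ as $f_{1}$ ranges over its coset modulo $I^{2}$, the residual ideal $K$ does not live in $\bar A$ in any useful way, and the base case of an induction already contains the whole problem (an invertible ideal with trivial conormal in a one-dimensional ring need not be principal, as you yourself note). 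Appealing to an ``Euler-type class'' is circular here, since vanishing of that class is essentially the statement you are trying to prove.

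The paper supplies exactly the missing device, and it is global rather than a variation of the lift. Since $I/I^{2}$ is free, $\operatorname{Ext}^{1}_{A}(I,A)\cong\operatorname{Hom}_{A/I}(\wedge^{2}(I/I^{2}),A/I)\cong A/I$ is cyclic, and $\operatorname{hd}_{A}(I)\leq 1$; Serre's theorem then produces an exact sequence $0\to A\to P\to I\to 0$ with $P$ projective of rank $2$. Comparing this sequence locally with the Koszul complex on a minimal generating pair (Proposition~\ref{regular} guarantees any such pair is regular) identifies $\ker(P\to I)\cong\wedge^{2}P$, so $\wedge^{2}P\cong A$. Now tensor with $\bar A=A/(f_{1})$: because $\bar I=I/(f_{1})$ is projective over $\bar A$, the resulting sequence splits as $P/f_{1}P\cong \bar I\oplus L$, and taking determinants gives $\bar A\cong \bar I\otimes L$. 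The point is that $L$ is the \emph{image} of the map $\bar A\to P/f_{1}P$, hence a cyclic rank-one projective $\bar A$-module, hence free; therefore $\bar I\cong L^{*}\cong\bar A$ is principal. This is precisely the step your outline is missing---the trivial determinant of the Serre resolution $P$ is what forces $[\bar I]=0$, not any manipulation of the lift $(f_{1},f_{2})$.
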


 Motivated by the result of Bloch-Murthy-Szpiro mentioned above, we  extend our investigation to local complete intersection ideals in affine algebras of dimension four over the algebraic closure of a finite field with $p$ elements. In this setting, we establish the following result (see Theorem \ref{main2}):

\begin{theorem}\label{main2-int}
Let $A$ be an affine algebra of dimension $4$ over $\overline{\mathbb{F}}_p$,  and let $I \subset A$ be a local complete intersection ideal of height $2$. Then $I$ is a set-theoretic complete intersection.
\end{theorem}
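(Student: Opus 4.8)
The plan is to reduce the statement to a splitting property of the conormal bundle and then to resolve that property using the arithmetic of zero-cycles over $\overline{\mathbb{F}}_p$. Set $B = A/I$. Since $I$ is a local complete intersection of height $2$, the conormal module $N := I/I^2$ is a projective $B$-module of rank $2$, and $B$ is a $2$-dimensional affine algebra over $\overline{\mathbb{F}}_p$. The mechanism I would exploit is the one underlying the curve case (Theorem \ref{main1-int}), namely a Boratyński-type reduction: if $N$ admits a free direct summand of rank $1$ --- equivalently $N \cong B \oplus \det N$, equivalently $N$ has a unimodular element --- then by the Boratyński--Mohan Kumar construction one can produce elements $f_1, f_2 \in A$ with $\sqrt{(f_1, f_2)} = \sqrt{I}$, i.e. $I$ is a set-theoretic complete intersection. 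The role of the dimension hypotheses is exactly this splitting: when $\dim B = 1$ (the curve case) Serre's splitting theorem supplies it for free, because $\rank N = 2 > 1 = \dim B$; in our situation $\rank N = 2 = \dim B$, Serre's bound just fails, and the entire difficulty is concentrated in producing the free summand.

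Accordingly, the first genuine step is to establish that a projective module $N$ of rank $2$ over a $2$-dimensional affine $\overline{\mathbb{F}}_p$-algebra $B$ has a unimodular element. By Murthy's criterion relating splitting to top Chern classes --- and, for possibly singular or non-reduced $B$, the Euler class formalism of Bhatwadekar--Sridharan --- the module $N$ splits off $\det N$ precisely when its Euler class $e(N)$ vanishes in the twisted Euler class group $E(B, \det N)$, a group that for a two-dimensional base is governed by the zero-cycles $\mathrm{CH}_0(B) = \mathrm{CH}^2(B)$. Thus the theorem is reduced to the vanishing of a single zero-cycle class, the top Chern class of the conormal bundle of $I$, on the affine surface $\Spec(B)$.

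The main obstacle --- and the only place where the hypothesis on the field is used --- is this vanishing. Here I would invoke the arithmetic of zero-cycles on affine surfaces over $\overline{\mathbb{F}}_p$: such Chow groups are torsion, the Euler class of the conormal bundle is divisible, and together these features force $e(N) = 0$. This is precisely the technical core of the Bloch--Murthy--Szpiro treatment of surfaces in $\overline{\mathbb{F}}_p[x_1,\dots,x_4]$ \cite{bms}, which my argument must transplant to an arbitrary four-dimensional affine $\overline{\mathbb{F}}_p$-algebra. Because a set-theoretic complete intersection depends only on $\sqrt{I}$, a convenient route is to use this freedom first: replace $I$ by an ideal $J$ with $\sqrt{J} = \sqrt{I}$ whose conormal bundle carries a trivial Euler class, and then apply the splitting reduction to $J$. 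I expect the delicate points to be (i) porting the Bloch--Murthy--Szpiro zero-cycle computation from the polynomial ring to a general, possibly singular, affine algebra, where the Euler class groups over non-regular bases must be handled with care, and (ii) arranging the replacement of $I$ within its radical class while preserving the local complete intersection property that keeps $N$ projective of rank $2$.
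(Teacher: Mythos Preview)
Your outline heads in the right direction but takes a longer and shakier road than the paper, and one step is not justified as stated. The paper's argument is a two-line reduction: it cites Das's refinement of the Ferrand--Szpiro construction over $\overline{\mathbb{F}}_p$ (\cite{d1}, recorded here as Theorem~\ref{fe}), which for any local complete intersection ideal $I$ of height $r\ge 2$ with $\dim(A/I)\le 2$ produces $J$ with $\sqrt{J}=\sqrt{I}$ and $J/J^{2}$ \emph{free} of rank~$r$. Once $J/J^{2}$ is free, the paper's own Theorem~\ref{main3} (proved via Serre's sequence $0\to A\to P\to J\to 0$, a Koszul computation showing $\wedge^{2}P\simeq A$, and an elementary splitting argument---no Borat\'ynski, no Euler classes) gives that $J$ is an honest complete intersection. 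All of the $\overline{\mathbb{F}}_p$-specific input you describe---torsion of $\mathrm{CH}_{0}$, divisibility, handling singular $B$---is already packaged inside Das's theorem; the paper simply quotes it rather than redeveloping it.

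The weak link in your plan is the claim that a single unimodular element in $N=I/I^{2}$, i.e.\ a splitting $N\cong B\oplus\det N$, is enough for ``the Borat\'ynski--Mohan Kumar construction'' to produce two elements with radical $\sqrt{I}$. Borat\'ynski's theorem requires $I/I^{2}$ to be generated by $\operatorname{ht}(I)=2$ elements, which for a rank-two projective means \emph{free}; a decomposition $B\oplus L$ with $L$ a nontrivial line bundle over the surface $B$ does not give two generators. To close this you would also have to trivialise $\det N$ after the radical replacement---and arranging that is precisely the content of the Ferrand--Szpiro/Das doubling. So your points~(i) and~(ii) are exactly the substance of \cite{d1}; rather than porting Bloch--Murthy--Szpiro by hand and then feeding the output into a separate splitting criterion, it is cleaner (and what the paper does) to invoke Theorem~\ref{fe} to get $J/J^{2}$ free in one stroke and finish with Theorem~\ref{main3}.
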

\medskip

%As direct  consequences of Theorems \ref{main1-int} and \ref{main2-int}, we obatain the following corollaries.
 
 %We note that  Theorems \ref{main1-int} and \ref{main2-int}  immediately yield the following corollaries.
 
 Finally, we note  that Theorems 2.2 and 2.4 lead directly to the following corollaries.
 
 \bc
 Let $A$ be a ring of dimension $3$, and let $\mathcal{C}\subset {\rm Spec}(A)$ be a curve which is local complete intersection. Then $\mathcal{C}$ is a set-theoretic complete intersection. 
\ec

\bc
Let $A$ be an affine algebra  of dimemsion $4$ over $\ol{\mathbb{F}}_p$, and let $\mathcal{S}\subset {\rm Spec}(A)$ be a local complete intersection surface. Then $\mathcal{S}$ is a  set-theoretic complete intersection.
\ec

\section{Preliminaries}
\noindent{\bf Assumptions.} Throughout this paper, rings are assumed to be commutative Noetherian and projective modules are finitely generated and of constant rank. For a ring $A$,  $\text{dim}(A)$ will denote the Krull dimension of $A$. 
\smallskip

We begin with the following definition.
\medskip

\begin{definition}
  An ideal $I$ of a ring $A$ is called a \emph{complete intersection ideal} if there exists a regular sequence $f_1,\cdots,f_r$ in $I$ such that $I=( f_1,\cdots,f_r)$, where $\text{ht}(I)=r$.\\
An ideal $I\subset A$ is called a \emph{set-theoretic complete intersection ideal} in $A$, if there exists a regular sequence $f_1,\cdots,f_r$ in $A$ such that $\sqrt{I}=\sqrt{( f_1,\cdots,f_r )}$, where $\text{ht}(I)=r$.\\
 An ideal $I\subset A$ is called a \emph{local complete intersection ideal} in $A$, if $I_{\mathfrak{p}}$ is a complete intersection ideal in  $A_{\mathfrak{p}}$ for any prime ideal $\mathfrak{p}$ that contains $I$.
\end{definition}
\medskip

\rmk If $I\subset A$ is a local complete intersection ideal of height $n$, then $I/I^2$ is a projective $A/I$-module of rank $n$.
\medskip

%The proof of the following lemma can be found in  \cite[Lemma 3.3]{mz}.

We now state a lemma that will serve as an important  ingredient in the proof of the main theorem.

\begin{lemma}\label{fact-1}
  Let $I\subset A$ be a local complete intersection ideal of height $n$.  Then $I$ is a complete intersection ideal if and only if it is generated by $n$ elements.
\end{lemma}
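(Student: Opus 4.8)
The forward implication is immediate: a complete intersection ideal is, by definition, generated by $\operatorname{ht}(I)=n$ elements. So the content is the converse, and the plan is to assume $I=(f_1,\dots,f_n)$ with $\operatorname{ht}(I)=n$ and $I$ a local complete intersection, and then to produce a regular sequence of length $n$ generating $I$.

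First I would analyze the conormal module. By the Remark, $I/I^2$ is projective of rank $n$ over $A/I$; the $n$ given generators induce a surjection $(A/I)^n\twoheadrightarrow I/I^2$ of projective $A/I$-modules of equal rank $n$, which splits because the target is projective, and the complementary summand, being projective of rank $0$, must vanish. Hence $I/I^2$ is free of rank $n$ with basis $\bar f_1,\dots,\bar f_n$. This is the only place where the hypothesis ``$n$ generators'' enters, and it is exactly the freeness hypothesis appearing in the next theorem.

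The crux is to show $\operatorname{grade}(I)=n$, that is, that the local complete intersection hypothesis forces the grade up to the height even though $A$ need not be Cohen--Macaulay. Using $\operatorname{grade}(I)=\inf\{\operatorname{depth}A_{\mathfrak p}:\mathfrak p\supseteq I\}$, I would fix any prime $\mathfrak p\supseteq I$: then $I_{\mathfrak p}$ is a complete intersection in $A_{\mathfrak p}$, so $A_{\mathfrak p}$ contains a regular sequence of length $\operatorname{ht}(I_{\mathfrak p})\ge\operatorname{ht}(I)=n$, whence $\operatorname{depth}A_{\mathfrak p}\ge n$. Taking the infimum gives $\operatorname{grade}(I)\ge n$, and since $\operatorname{grade}(I)\le\operatorname{ht}(I)=n$ always holds, we get $\operatorname{grade}(I)=n$.

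Finally I would convert ``$\operatorname{grade}(I)=n=$ number of generators'' into an honest complete intersection, and this last step is where I expect the real obstacle to lie. The grade--Koszul formula already shows the Koszul complex on $f_1,\dots,f_n$ is acyclic, but this only makes $f_1,\dots,f_n$ \emph{Koszul}-regular; in a non-local ring a Koszul-regular system need not be a regular sequence in any given order, so one cannot simply declare the $f_i$ to be a regular sequence. To get around this I would use the freeness from the second step: build $g_1,\dots,g_n\in I$ one at a time, at each stage choosing (by prime avoidance, legitimate because the surviving grade keeps $I$ outside every associated prime of the current quotient) a combination of the $f_i$, adjusted modulo $I^2$, that is a nonzerodivisor on the quotient and still completes a basis of $I/I^2$. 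Then $\bar g_1,\dots,\bar g_n$ is a basis of $I/I^2$, so $I=(g_1,\dots,g_n)+I^2$, and a Nakayama/determinant-trick argument identifies $(g_1,\dots,g_n)$ with $I$ at every prime over $I$; since the $g_i$ form a regular sequence of height $n=\operatorname{ht}(I)$, this forces $I=(g_1,\dots,g_n)$, a complete intersection. Reconciling Koszul-regularity with the literal regular-sequence definition in this way, with the grade computation standing in for Cohen--Macaulayness of $A$, is the heart of the argument.
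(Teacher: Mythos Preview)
Your strategy is close to the paper's: both arguments build a regular sequence $g_1,\dots,g_n$ inside $I$ inductively, using prime avoidance and the observation that the local complete intersection hypothesis forces $I\not\subseteq\mathfrak p$ for every associated prime $\mathfrak p$ of the current partial quotient $A/(g_1,\dots,g_k)$. Your grade computation is exactly this observation repackaged, so that part is fine.

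The gap is in your last move. From ``$\bar g_1,\dots,\bar g_n$ is a basis of $I/I^2$'' and ``$g_1,\dots,g_n$ is a regular sequence with $\operatorname{ht}(g_1,\dots,g_n)=n=\operatorname{ht}(I)$'' you cannot conclude $I=(g_1,\dots,g_n)$. Nakayama only gives $I_{\mathfrak p}=(g_1,\dots,g_n)_{\mathfrak p}$ for primes $\mathfrak p\supseteq I$; at a prime containing $(g_1,\dots,g_n)$ but not $I$ you learn nothing, and such primes can exist. A one-variable example already kills the argument: take $A=k[x]$, $I=(x)$, $f_1=x$, and choose $g_1=x+x^2$. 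Then $g_1$ is a nonzerodivisor, $\bar g_1$ generates $I/I^2$, $\operatorname{ht}(g_1)=1=\operatorname{ht}(I)$, and $I_{(x)}=(g_1)_{(x)}$, yet $(g_1)=(x(1+x))\subsetneq(x)=I$.

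The paper avoids this by never leaving the ambient generating set: at stage $k$ one has $I=(g_1,\dots,g_k,f_{k+1},\dots,f_n)$, and one sets $g_{k+1}=f_{k+1}+\lambda$ with $\lambda\in(f_{k+2},\dots,f_n)$ chosen by prime avoidance so that $g_{k+1}$ misses every associated prime of $A/(g_1,\dots,g_k)$. Because $\lambda$ lies in the ideal of the remaining $f$'s, the equality $I=(g_1,\dots,g_{k+1},f_{k+2},\dots,f_n)$ is automatic, and at the end one has $I=(g_1,\dots,g_n)$ on the nose. Your detour through freeness of $I/I^2$ is thus unnecessary; the essential bookkeeping is to track generators of $I$, not merely of $I/I^2$.
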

\proof
If $I$ is a complete intersection ideal of height $n$, then by definition,  it is generated by a regular sequence of length $n$, and in particular by $n$ elements.

Conversely, suppose $I\subset A$ is a local complete intersection ideal of height $n$,  and suppose $I = (f_1, \dots, f_n)$ is generated by $n$ elements.  We want to show that $I$ is generated by a regular sequence of length $n$.   We proceed by induction.  

Assume  as the induction hypothesis  that we have constructed a regular sequence  $g_1,\cdots, g_k\in I$  and $k < n$  such that  $$I = (g_1, \cdots , g_k, f_{k+1},\cdots , f_n).$$

Let the associated primes of $A/(g_1,\cdots, g_k)$ be $\mathfrak{p}_1, \cdots, \mathfrak{p}_s, \mathfrak{q}_1, \cdots, \mathfrak{q}_t$,   where:\\
$\bullet$ $f_{k+1} \in \mathfrak{p}_i$ for $1 \leq i \leq s$, and \\
$\bullet$ $f_{k+1} \notin \mathfrak{q}_j$ for $1 \leq j \leq t$.   

Since $\text{depth}(A_{\mathfrak{p}_j}) = k < n=\text{ht}(I)$,  and $I$ is a local complete intersection ideal,  it follows that $I\nsubseteq \mathfrak{p}_i$ for any $i$.  Hence,  using prime avoidance,  we can find an element
$$\lambda  \in I\cap \mathfrak{q}_1 \cap \cdots \cap \mathfrak{q}_t\setminus \mathfrak{p}_1 \cup \cdots \cup \mathfrak{p}_s.$$

Because $\lambda \in (f_{k+2},\cdots,f_n)$,  we deduce that  
$$I=(g_1,\cdots, g_{k+1},f_{k+2},\cdots,f_n),$$
where we define $g_{k+1} := f_{k+1} + \lambda$.   By construction, $g_{k+1}$ does not lie in any associated prime of $A/(g_1, \dots, g_k)$.  Thus, its image in  $A/(g_1, \dots, g_k)$ is a non-zerodivisor.  
Consequently,  $g_1, \cdots , g_{k+1}$ form a regular sequence.  
 By induction, we obtain a regular sequence $g_1, \cdots, g_n$ that generates $I$.  Therefore,  $I$ is a complete intersection ideal.
\qed
\medskip

The following result is due to Serre \cite{se},  can also be found in  \cite[Lemma 1.3]{mu0}. Given an $A$-module $M$, we denote by ${\rm hd}_A(M)$ the homological dimension of $M$. 

\bt\label{serre-1}
Let $A$ be a ring and $M$ be a finitely generated $A$-module with {\rm hd}$_A(M)\leq 1$.  Suppose that {\rm Ext}$_A^1(M,A)$ is a cyclic $A$-module.  Then, there is an exact sequence 
$$0\longrightarrow A\longrightarrow P\longrightarrow M\longrightarrow 0,$$
where  $P$ is a projective $A$-module.
\et
\medskip

We recall the following lemma from \cite{ak}; see also \cite[Lemma 1.1]{mu0}.

\bl\label{ext}
Let $A$ be a ring and $I\subset A$ be a local complete intersection ideal of height $r$. Then there is a canonical isomorphism
$$\omega={\rm Ext}_A^{r-1}(I,A)={\rm Ext}_A^r(A/I,A)\simeq {\rm Hom}_{A/I}(\wedge^r(I/I^2),A/I).$$
\el
The following result is due to Ferrand and Szpiro \cite{fe, sz}; a proof can be found in  \cite[Theorem 10.3.3]{ir}.
\begin{theorem} \label{fs}
Let $A$ be a ring,  and let $I\subset A$ be a local complete
intersection ideal of height $r\geq 2$ with $\dim(A/I) \leq 1$.  Then there exists a local complete intersection ideal $J\subset A$ of height $r$ such that:
\begin{enumerate}
  \item $\sqrt{I}=\sqrt{J}$, and
  \item  $J/J^{2}$ is a free $A/J$-module of rank $r$.
\end{enumerate}
\end{theorem}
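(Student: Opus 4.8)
The plan is to isolate the single genuine obstruction — the determinant of the conormal module — and then to kill it by a radical-preserving modification of the scheme structure on the (at most one-dimensional) subscheme $V(I)$.

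First I would record that $I/I^2$ is projective of rank $r$ over $A/I$ (the Remark following the definition). Since $\dim(A/I)\le 1<r$, the Bass--Serre splitting theorem for projective modules of rank exceeding the dimension of the base gives
$$ I/I^2 \;\cong\; (A/I)^{\,r-1}\oplus L,\qquad L:=\textstyle\bigwedge^{r}(I/I^2). $$
Hence $I/I^2$ is free precisely when the line bundle $L$ is trivial, and the whole theorem reduces to the following assertion: there is an ideal $J$ with $\sqrt{J}=\sqrt{I}$, still lci of height $r$, whose conormal determinant $\bigwedge^{r}(J/J^2)$ is free. By Lemma \ref{ext} this determinant is canonically dual to $\omega_J=\mathrm{Ext}^{r}_A(A/J,A)$, so equivalently I must produce $J$ in the radical class of $I$ with free dualizing module $\omega_J$.

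Next I would set up the surgery. Choosing $a_1,\dots,a_{r-1}\in I$ that lift a basis of the free summand $(A/I)^{\,r-1}$, and adjusting them by a prime-avoidance argument of exactly the kind used in the proof of Lemma \ref{fact-1} (available because $I$, being lci of height $r$, lies in no associated prime of height $<r$), I can assume $a_1,\dots,a_{r-1}$ is a regular sequence inducing the splitting above. This concentrates the entire obstruction in the rank-one bundle $L$ on the at most one-dimensional scheme $\mathrm{Spec}(A/I)$. I would then invoke Theorem \ref{serre-1}: feeding in the appropriate rank-one module built from $I$ and $L$, it yields an exact sequence $0\to A\to P\to M\to 0$ with $P$ projective, from which a new ideal $J$ can be read off as the degeneracy locus of a section of $P$. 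Because one is free to vary this section within its radical class, one can force the determinant of $J/J^2$ to be trivial while keeping $\sqrt{J}=\sqrt{I}$; combined with the free summand of the first step this gives $J/J^2\cong (A/J)^{r}$.

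The step I expect to be the main obstacle is this last surgery, where two requirements pull against each other. The module handed to Theorem \ref{serre-1} must genuinely have homological dimension $\le 1$ and cyclic $\mathrm{Ext}^1$ — this is precisely where the hypothesis $\dim(A/I)\le 1$ does the real work, collapsing the relevant Ext to a cyclic module — while the resulting $J$ must remain lci of height $r$ with no spurious components, so that $\sqrt{J}=\sqrt{I}$ survives. One must take care \emph{not} to over-shoot into making $I$ a complete intersection: free conormal is strictly weaker than generation by $r$ elements (cf. Lemma \ref{fact-1}), and indeed local complete intersection curves of codimension two need not be complete intersections, so the construction must trivialize the determinant without globally reducing the number of generators to $r$. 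Once the determinant is identified as the sole obstruction, the remaining bookkeeping is formal.
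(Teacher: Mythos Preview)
The paper does not prove this theorem; it is quoted from Ferrand--Szpiro with a reference to \cite{ir}. Your opening moves are correct and match the standard argument: $I/I^2$ is projective of rank $r$ over $A/I$, Serre splitting over the $\le 1$-dimensional base gives $I/I^2\cong (A/I)^{r-1}\oplus L$ with $L=\wedge^r(I/I^2)$, and the problem reduces to killing $L$ (equivalently $\omega_I$) within the radical class of $I$.

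The gap is in the surgery step. You propose to feed ``the appropriate rank-one module built from $I$ and $L$'' into Theorem~\ref{serre-1}, asserting that $\dim(A/I)\le 1$ ``collapses the relevant Ext to a cyclic module''. It does not. For $r=2$ and $M=I$ one has $\mathrm{Ext}^1_A(I,A)\cong\omega_I$, a line bundle on a curve, and line bundles on one-dimensional schemes are not cyclic in general (a non-principal ideal class on a Dedekind domain already fails). In the paper's proof of Theorem~\ref{main1} the cyclicity of $\mathrm{Ext}^1$ is obtained only \emph{after} applying the present theorem to arrange $J/J^2$ free, whence $\omega_J\cong A/J$; so invoking Theorem~\ref{serre-1} here presupposes the conclusion. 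For $r>2$ there is a second obstruction: $\mathrm{hd}_A(I)=r-1>1$, so Theorem~\ref{serre-1} does not even apply to $I$, and you never say what the mysterious rank-one input actually is or why it has $\mathrm{hd}\le 1$.

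The genuine Ferrand--Szpiro construction bypasses Theorem~\ref{serre-1} entirely. One chooses a surjection $\varphi\colon I/I^2\twoheadrightarrow\omega_I$ --- this exists precisely because $\mathrm{rank}(I/I^2)=r\ge 2>\dim(A/I)$, which is where the dimension hypothesis is spent --- and defines $J$ by $I^2\subset J\subset I$ with $J/I^2=\ker\varphi$. Then $\sqrt J=\sqrt I$ is immediate, a local calculation shows $J$ is still lci of height $r$, and a linkage-type computation yields $\omega_J\cong A/J$, hence $J/J^2$ is free. Your regular sequence $a_1,\dots,a_{r-1}$ is not needed; the doubling $I^2\subset J\subset I$ does all the work.
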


\begin{theorem}\label{ge}
Let $(A,\mathfrak{m})$ be a local ring, and let $I \subset A$ be a proper  ideal.  Suppose there exist elements $f_{1},\cdots,f_{n} \in I$ such that $I=(f_1,\cdots,f_n)+ I^{2}$. Then $I=(f_{1}, \cdots, f_{n})$.
\end{theorem}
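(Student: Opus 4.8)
The plan is to recognize this statement as a direct incarnation of Nakayama's lemma. First I would set $J=(f_1,\cdots,f_n)$ and introduce the quotient module $M=I/J$. Since the ambient ring is Noetherian (as assumed throughout the paper), the ideal $I$ is finitely generated, and hence so is $M$ as an $A$-module; this finite generation is the only hypothesis beyond the displayed one that Nakayama's lemma will require.

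The key step is to repackage the hypothesis $I=(f_1,\cdots,f_n)+I^2=J+I^2$ as a self-reproduction statement about $M$. Concretely, $I\cdot M=I\cdot(I/J)$ is the image of $I^2$ in $I/J$, namely $(I^2+J)/J$; but the hypothesis says precisely that $I^2+J=I$, so $I\cdot M=I/J=M$. Thus $M$ satisfies $I\cdot M=M$.

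Next I would invoke locality. Because $I$ is a proper ideal of the local ring $(A,\mathfrak{m})$, we have $I\subseteq\mathfrak{m}$, and $\mathfrak{m}$ is the Jacobson radical. From $I\cdot M=M$ and $I\subseteq\mathfrak{m}$ we get $\mathfrak{m}\cdot M\supseteq I\cdot M=M$, whence $\mathfrak{m}\cdot M=M$. Applying Nakayama's lemma to the finitely generated module $M$ over the local ring $(A,\mathfrak{m})$ forces $M=0$, i.e.\ $I=J=(f_1,\cdots,f_n)$, as desired.

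I do not expect any genuine obstacle here: the result is essentially a restatement of Nakayama's lemma, and the entire argument hinges on the single algebraic identity $I\cdot(I/J)=I/J$ extracted from the hypothesis. The only point meriting a word of care is confirming finite generation of $M$ so that Nakayama applies, which is immediate from the standing Noetherian assumption; alternatively, one could phrase the conclusion as an instance of the determinant-trick form of Nakayama applied directly to the ideal $I$.
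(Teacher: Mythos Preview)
Your proposal is correct and follows essentially the same route as the paper: set $J=(f_1,\dots,f_n)$, use Noetherianity to ensure finite generation, note $I\subset\mathfrak{m}$ since $I$ is proper, and apply Nakayama's lemma to conclude $I=J$. The paper's version is terser, invoking Nakayama directly on the equality $I=J+I^2$ without explicitly introducing the quotient module $M=I/J$, but the underlying argument is identical.
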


\proof
Since $\mathfrak{m}$ is the unique maximal ideal of $A$, we have $I\subset \mathfrak{m}$. As $A$ is Noetherian, the ideal $I$ is finitely generated. Let $J=(f _1,\cdots,f_n )$  be the ideal generated by these elements. Then $I=J+I^2$. By the Nakayama's Lemma, we conclude that $I=J$. This completes the proof.
\qed
    
\medskip

The following  proposition is from \cite[Proposition 2.1.1]{ir}.

\begin{proposition}{\label{splits}}
Let $A$ be a ring and $P$ be an $A$-module. Then   
 $P$ is projective if and only if every exact sequence 
 $0\longrightarrow M \xrightarrow{\;\,f\;\,} N \xrightarrow{\;\,g\;\,} P \longrightarrow 0$ splits.
\end{proposition}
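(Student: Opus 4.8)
The plan is to prove the two implications separately, using the lifting characterization of projectivity for one direction and the direct-summand characterization for the other.

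For the forward implication, I would assume $P$ is projective and take an arbitrary short exact sequence $0\lra M \xra{f} N \xra{g} P \lra 0$. Since $g$ is surjective, I can apply the defining lifting property of a projective module to the identity morphism $\mathrm{id}_P\colon P \to P$ against the surjection $g$: this produces a section $s\colon P \to N$ with $g\circ s = \mathrm{id}_P$. Such a section is precisely a splitting of the sequence, so every short exact sequence terminating in $P$ splits.

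For the converse, I would assume that every short exact sequence ending in $P$ splits and deduce projectivity. The key step is to realize $P$ as a quotient of a free module: choosing any generating set of $P$ yields a surjection $\pi\colon F \surj P$ from a free $A$-module $F$, with kernel $K=\ker\pi$. This gives a short exact sequence $0\lra K \lra F \xra{\pi} P \lra 0$, which splits by hypothesis, so $F \cong K \oplus P$ and $P$ is a direct summand of the free module $F$. It then remains to invoke the standard fact that a direct summand of a free module is projective, which completes the argument.

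I do not expect a genuine obstacle here; the result is elementary. The only point requiring care is to avoid circularity between the two characterizations of projectivity: the forward direction should be read off directly from the lifting-property definition, while the converse should conclude via the direct-summand-of-free description together with the elementary observation that a summand of a projective (in particular free) module is again projective.
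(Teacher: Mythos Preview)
Your argument is correct and is the standard textbook proof of this characterization of projectivity. Note that the paper does not actually supply its own proof of this proposition: it simply cites \cite[Proposition 2.1.1]{ir}, so there is nothing substantive to compare against, and your write-up would serve perfectly well as a self-contained justification.
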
 
\medskip

\begin{definition}
Let $A$ be a ring, and let $M$ be an $A$-module. Let $u \colon M \longrightarrow A$ be an $A$-linear map. For $p \geq 0$, define 
$$
d_u \colon \Lambda^p(M) \longrightarrow \Lambda^{p-1}(M)$$
be the contraction with $u$, i.e., for $e_1, \ldots, e_p \in M$, 
$$
d_u(e_1 \wedge \cdots \wedge e_p)
= \sum_{i=1}^p (-1)^{i+1} u(e_i) \, e_1 \wedge \cdots \wedge \hat{e}_i \wedge \cdots \wedge e_p,$$
where  $\hat{e}_i$ indicates that $e_i$ is omitted.

It is easy to check that
$$
\cdots \xrightarrow{\; d_u \;} \Lambda^p(M) \xrightarrow{\; d_u \;} \Lambda^{p-1}(M) 
\xrightarrow{\; d_u \;} \cdots \xrightarrow{\; d_u \;} \Lambda^1(M) 
\xrightarrow{d_u = u} \Lambda^0(M) \longrightarrow 0$$
is a complex and that for $a \in \Lambda^p(M), \; b \in \Lambda(M)$, we have 
\begin{center}
$d_u(a \wedge b) = d_u(a) \wedge b + (-1)^p a \, d_u(b)$.
\end{center}
The complex defined above is called the {\it Koszul complex}  associated to $u$.
\end{definition}
\medskip

\bl\label{exact}
Let $A$ be a  ring,  and let $I\subset A$ be a complete intersection ideal of height $2$.  Suppose that $I=(x,y)$,  where $x, y$ form  a regular sequence in $A$.  Then the Koszul complex associated to $x$ and $ y$
$$0\longrightarrow A \xrightarrow{\;\, \phi \;\,} A^2 \xrightarrow{\;\, \psi \;\,} I\longrightarrow 0$$
is exact,  where the maps are defined by $\phi(a)=(-ya, xa)$ and $\psi(a_1,a_2)=xa_1+ya_2$. 
\el

\proof
Since $\{x,y \}$ forms a regular sequence,  $x \in A$ is a non-zerodivisor in $A$,  and  $y$ is a non-zerodivisor in $A/(x)$.

Suppose $r \in A$ satisfies $\phi(r) = (-yr,xr) = 0$. 
Then $rx = 0$.  Since $x$ is a non-zerodivisor,  it follows that $r=0$.  Hence $\phi$ is injective.

Let $(a,b) \in A^2$ with  $\psi(a,b) = ax + by = 0$.    
Reducing this equality modulo the ideal $(x)$ gives $\overline{b}\,\overline{y} = \ol 0$ in $A/(x)$.  
Since $\overline{y}$ is a non-zerodivisor in $A/(x)$, we deduce $\overline{b}=\ol 0$; hence $b = xt$ for some $t \in A$.  
 Substituting back, we get
\begin{center}
$ax + (xt)y = x(a+ty) = 0 \;\; \implies \;\; a+ty=0\;\;$ (because $x$ is a non-zerodivisor). 
\end{center}

Thus $a = -ty$ and $b=xt$, so
\[
(a,b) = (-yt, xt) = \phi(t).
\]
Hence $\ker(\psi) \subset \mathrm{im}(\phi)$. The reverse inclusion $\mathrm{im}(\phi) \subset \ker(\psi)$ is immediate from $\psi \circ \phi = 0$. Therefore, $
\ker(\psi) = \mathrm{im}(\phi)$.

By definition, $\psi$ maps $A^2$ onto the ideal $(x,y)$, so $\psi$ is surjective. Therefore, the Koszul complex  
$$ 0 \longrightarrow\; A \xrightarrow{\;\, \phi \;\,} A^2 \xrightarrow{\;\, \psi \;\,} I \;\longrightarrow 0
$$
is exact.
\qed
\medskip

As an immediate consequence of Lemma \ref{exact}, we obtain the following result.
\bl
Let $A$ be a ring and $I\subset A$ be a local complete intersection ideal of height $2$. Then the homological dimension ${\rm hd}(I)\leq 1$.
\el
\smallskip

\section{Main theorems}
This section is devoted to the proofs of our main results. As preparation, we begin with a key structural property of complete intersection ideals of height two in local rings. The following proposition will play a crucial role in the proofs of the theorems.

\bp\label{regular}
Let $A$ be a local ring,  and let $I\subset A$ be a complete intersection ideal of height $2$.  Then any minimal generating set of $I$ consisting of two elements forms a regular sequence in $A$. 
\ep

\proof
Since $I$ is a complete intersection ideal of height $2$,  there exists a regular sequence $\{a,b\}$ generating $I$.  Let $\{c,d\}$ be another minimal generating set of $I$.  We want to show $\{c,d\}$ also forms a regular sequence. 

Since both sets generate $I$,  there exist elements  $r_1,\,r_2,\,s_1,\,s_2\in A$ such that
\[
a = cr_{1} + dr_{2} 
\qquad \text{and} \qquad 
b = cs_{1} + ds_{2}.
\]
Equivalently,
\[
\begin{pmatrix} a \\ b \end{pmatrix} 
= U \begin{pmatrix} c \\ d \end{pmatrix},
\qquad \text{where } \quad 
U = \begin{pmatrix} r_{1} & r_{2} \\ s_{1} & s_{2} \end{pmatrix} \in M_{2}(A).
\]
We now show that $U$ is invertible. 
Suppose, for contradiction, that $\det(U) \in \mathfrak{m}$, where $\mathfrak{m}$ is the unique maximal ideal of $A$.  
Reducing modulo $\mathfrak{m}$, we obtain elements
\[
\overline{a},\; \overline{b}, \;\overline{c}, \;\overline{d}\; \in I/\mathfrak{m}I.
\]
Since $I/\mathfrak{m}I$ is a $2$-dimensional vector space over the field $A/\mathfrak{m}$,  
both $\{\overline{a},\overline{b}\}$ and $\{\overline{c},\overline{d}\}$ are bases.  
But then the change-of-basis matrix is $\overline{U}$, the reduction of $U$ modulo $\mathfrak{m}$. 
If $\det(U) \in \mathfrak{m}$, then $\det(\overline{U}) = 0$, contradicting the fact that  $\ol U$ must be invertible. 
Hence $\det(U) \notin \mathfrak{m}$, and therefore  $U \in GL_{2}(A)$.  

As in the standard Koszul description of a height two complete intersection ideal, consider the following Koszul complexes:
\[
0\longrightarrow A \xrightarrow{\;\, \alpha\;\, } A^{2}\xrightarrow{\;\, g\;\, } I \longrightarrow 0
\]
\[
0\longrightarrow A \xrightarrow{\;\, \beta\;\, } A^{2}\xrightarrow{\;\, h \;\, } I \longrightarrow 0,
\]

where
\[
\alpha(1) = \begin{pmatrix}-b \\ a\end{pmatrix}, \quad g(e_{1}) = a,\ g(e_{2}) = b,
\]
and
\[
\beta(1) = \begin{pmatrix}-d \\ c\end{pmatrix}, \quad h(e_{1}) = c,\ h(e_{2}) = d.
\]
%Let \(M=U^{t}\), the transpose of $U$.  
Note that $M:=U^t:A^2\longrightarrow A^2$ induces an $A$-module homomorphism $\sigma:A\xrightarrow{\; \wedge^2(M)\; } A$.  
It is  straightforward to verify that the following diagram is commutative:

\[
\begin{tikzcd}
0 \arrow{r} & A \arrow{r}{\alpha} \arrow{d}{\sigma} & A^{2} \arrow{r}{g} \arrow{d}{M} & I \arrow{r} \arrow{d}{\mathrm{id}} & 0\\
0 \arrow{r} & A \arrow{r}{\beta} & A^{2} \arrow{r}{h} & I \arrow{r} & 0
\end{tikzcd}
\]
That is, $\beta \circ \sigma = M\circ \alpha$ and  $ g = h\circ M$. 

Since $\{a, b\}$ is a regular sequence,  Lemma \ref{ext} implies that  the top Koszul complex in the diagram is exact.  As $M$ is invertible, the induced vertical map 
$A^{2}\longrightarrow A^{2}$ in the diagram is an automorphism of the free module $A^2$. Moreover,  $\sigma=\wedge^2(M):A\longrightarrow A$ is an automorphism; indeed,  $\wedge^2(M)$ acts by multiplication with ${\rm det}(M)$, which is a unit  $u\in A^*$.  We now proceed to  prove that $\{c,d\}$ forms a regular sequence in $A$.

First we show that $c$ is a non-zerodivisor. 
 Suppose $cx=0$. Then  
$$ h(x,0)=cx+0y=0.$$  
Since $M$ is an automorphism, there exists $(x_1,x_2)\in A^2$ such that  $M(x_1,x_2)=(x,0)$.   
Thus  from the commutativity of the above diagram, we obtain  $$ g(x_1,x_2)=(h\circ M)(x_1,x_2)=h(x,0)=0.$$  
Since the top Koszul complex in the diagram is exact, $(x_1,x_2)\in \text{im}(\ga)= \text{ker}(g)$, so there exists $z\in A$ with  $ \ga(z)=(x_1,x_2)$.   
Again from the commutativity of the above diagram   
$$ (\gb\circ \sigma)(z)=(M\circ \ga)(z)=(x,0). $$  
Therefore,  
$$ \beta(uz)=(x,0)\quad\implies\quad (-duz,\,cuz)=(x,0).$$  
Thus $x=-duz$ and $cuz=0$. In particular, $cz=0$.  
 Repeating the same reasoning with $cz=0$, we obtain inductively  
$$ x=(-1)^n d^n v z_n \quad\text{and}\quad cz_n=0 ,$$  
for some unit $v$ and $z_n\in A$. This shows that $x\in \cap(d^n)$.  By the Krull's Intersection Theorem \cite[Corollary  8.25]{sh}, it follows that $x=0$.  Hence $c$ is a non-zerodivisor.  

Next, we show that  $d$ is a non-zerodivisor modulo  $(c)$.  
 Let tilde denote reduction modulo $(c)$.  Suppose that $\widetilde{d}\,\widetilde{y} = \widetilde{0} \quad \text{in } A/(c)$. Then $dy = ct$ for some $t \in A$, implies that $h(-t,y)=0$. Since $M$ is an automorphism, there exists $(y_1,y_2)\in A^2$ such that  
$$ M(y_1,y_2)=(-t, y).$$  
From the commutative diagram, we have
$$ g(y_1,y_2)=(h\circ M)(y_1,y_2)=0.$$  
Since the top Koszul complex in the diagram is exact, $(y_1,y_2)\in {\rm ker}(g)=\text{im}(f)$, so there exists $w\in A$ with  $ \ga(w)=(y_1,y_2)$.   
Applying the commutativity relation again,
$$ (\gb\circ \sigma)(w)=(M\circ \ga)(w)=M(y_1,y_2)=(-t,y). $$  
Therefore,  
$$ \gb(uw)=(-t,y)\quad\implies\quad (-duw,\,cuw)=(-t,y).$$  
In particular, $cuw=y$,  which gives $\widetilde{y}=\widetilde 0$ in $A/(c)$. Therefore, $d$ is a non-zerodivisor in $A/(c)$.  It follows that 
$\{c,d\}$ forms a regular sequence in $A$ and the proof is complete.
\qed

\medskip

%Having established the necessary preliminaries
After setting up the required background, we are now ready to present the main results.  The following theorem generalizes  Ferrand's result \cite{fe} (see also \cite[Corollary 1.8]{mu0}) to the most general framework, namely, that of three-dimensional commutative Noetherian rings.

\begin{theorem}\label{main1}
  Let $A$ be a ring of dimemsion $3$, and let $I\subset A$ be a local complete intersection  ideal of height $2$. Then $I$ is a set-theoretic complete intersection.
\end{theorem}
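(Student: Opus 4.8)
The plan is to replace $I$ by an ideal with free conormal bundle, use Serre's construction to produce a rank-two projective surjecting onto it, and then cut the number of generators down to two (up to radical).

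First I would record that $\dim(A/I)\le 1$: every prime $\mathfrak p\supseteq I$ satisfies $\operatorname{ht}(\mathfrak p)\ge\operatorname{ht}(I)=2$, whence $\dim(A/\mathfrak p)\le\dim A-\operatorname{ht}(\mathfrak p)\le 1$, so $\dim(A/I)\le 1$. Thus Theorem~\ref{fs} (Ferrand--Szpiro) applies and yields a height-two local complete intersection ideal $J$ with $\sqrt I=\sqrt J$ and with $J/J^{2}$ free of rank $2$ over $A/J$. Since being a set-theoretic complete intersection depends only on the radical, it suffices to treat $J$.

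Next, because $J$ is a height-two local complete intersection, $\operatorname{hd}_A(J)\le 1$ (the lemma following Lemma~\ref{exact}), and Lemma~\ref{ext} identifies $\operatorname{Ext}^{1}_{A}(J,A)\cong\Hom_{A/J}(\wedge^{2}(J/J^{2}),A/J)$; the freeness of $J/J^{2}$ gives $\wedge^{2}(J/J^{2})\cong A/J$, so this module is cyclic. Theorem~\ref{serre-1} then produces a short exact sequence
\[
0\lra A\lra P\lra J\lra 0,
\]
with $P$ projective; comparing ranks (and noting $\operatorname{ht}(J)=2>0$, so $J$ has rank $1$) shows $\operatorname{rank}P=2$. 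Moreover $\det P\cong A$: the displayed sequence is a projective resolution of $J$, so $\det P\cong\det J$, and from $0\to J\to A\to A/J\to 0$ together with the fact that $A/J$ is supported in codimension $\ge 2$ one gets $\det J\cong A$. Thus the whole problem reduces to producing two elements $f_{1},f_{2}$ with $\sqrt{(f_{1},f_{2})}=\sqrt J$ — ideally by showing $J$ itself is two-generated, i.e. that $P\cong A^{2}$.

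This last step is where I expect the real difficulty to lie, and where the hypothesis $\dim A=3$ must genuinely enter: a rank-two projective with trivial determinant over a three-dimensional ring need \emph{not} be free, so one cannot merely split off a free summand from $P$; the obstruction sits precisely on the one-dimensional locus $V(J)$. What is available is that $P/JP\cong(A/J)^{2}$ is free (the section $A\to P$ lands in $JP$, as one checks locally from the Koszul model) and that $V(J)$ has dimension $\le 1$. Concretely, I would choose $a,b\in J$ lifting a basis of $J/J^{2}$, so that $J=(a,b)+J^{2}$, and then apply the determinant trick to the finitely generated module $J/(a,b)$, which satisfies $J\cdot\bigl(J/(a,b)\bigr)=J/(a,b)$, to obtain $e\in J^{2}$ with $(1-e)J\subseteq(a,b)$. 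This gives $\sqrt J=\sqrt{(a,b,e)}$ with $e$ idempotent modulo $(a,b)$, exhibiting $V(a,b)$ as the disjoint union of $V(J)$ and the clopen piece where $e$ becomes a unit. A Boratyński-type absorption of $e$ into the two generators $a,b$ — exploiting this idempotent splitting and the fact that the extraneous component is cut out cleanly — is what should collapse three generators to two up to radical. Making this absorption rigorous (equivalently, trivializing $P$ along the curve $V(J)$) is the crux of the argument.

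Finally, once $\sqrt J=\sqrt{(f_{1},f_{2})}$ with $(f_{1},f_{2})$ of height $2$, I would invoke Lemma~\ref{fact-1} and Proposition~\ref{regular} to arrange that $f_{1},f_{2}$ form a regular sequence of length $2=\operatorname{ht}(I)$: if the two elements generate $J$ on the nose, then $J$ is a height-two local complete intersection generated by two elements, hence a complete intersection by Lemma~\ref{fact-1}, and every two-element generating set is a regular sequence by Proposition~\ref{regular}; in the purely set-theoretic case a standard prime-avoidance adjustment turns $f_1,f_2$ into a regular sequence with the same radical. Since $\sqrt I=\sqrt J=\sqrt{(f_{1},f_{2})}$, this exhibits $I$ as a set-theoretic complete intersection.
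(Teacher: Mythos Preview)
Your reduction to an ideal $J$ with $\sqrt I=\sqrt J$ and free conormal bundle, and the construction of the Serre sequence $0\to A\to P\to J\to 0$, match the paper exactly. Your computation of $\det P\cong A$ via determinants of perfect complexes is a legitimate alternative to the paper's argument, which instead forms the Koszul complex $0\to\wedge^{2}P\to P\to J\to 0$ and checks it is exact by localizing and invoking Proposition~\ref{regular}, so that $\wedge^{2}P\cong\ker(P\to J)\cong A$.

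The genuine gap is the final step. You explicitly leave open the passage from three generators $(a,b,e)$ to two, calling a ``Borazy\'nski-type absorption'' the crux without carrying it out; as written this is a plan, not a proof. More importantly, your diagnosis of where the difficulty lies is off: you expect $\dim A=3$ to enter here because a rank-two projective with trivial determinant over a three-dimensional ring need not be free. In fact the paper never trivializes $P$ globally and never uses $\dim A=3$ at this stage (cf.\ Theorem~\ref{main3}, which holds for $\dim A\ge 2$); the dimension hypothesis is consumed entirely by Ferrand--Szpiro.

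The missing idea is this. Choose $c,d\in J$ lifting a free basis of $J/J^{2}$. Locally $J=(c,d)$ by Nakayama, and by Proposition~\ref{regular} the pair $(c,d)$ is a regular sequence in every $A_{\mathfrak p}$; hence $J/(c)$ is a rank-one projective $A/(c)$-module. Now reduce the Serre sequence modulo $c$: since $J/(c)$ is projective, the resulting surjection $P/cP\twoheadrightarrow J/(c)$ splits, giving $P/cP\cong J/(c)\oplus Q$ with $Q$ cyclic of rank one (the image of $A/(c)$). Taking $\wedge^{2}$ and using $\wedge^{2}P\cong A$ yields $A/(c)\cong J/(c)\otimes Q$, so $J/(c)\cong Q^{*}$. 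But a cyclic rank-one projective is free, hence so is its dual, and therefore $J/(c)$ is cyclic. Thus $J$ is generated by two elements, and Lemma~\ref{fact-1} makes $J$ an honest complete intersection --- stronger than the set-theoretic statement you were aiming for.
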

\begin{proof}
The proof proceeds in several steps.\\
  {\bf Step-1:} Since $I$ is a local complete intersection ideal of height $2$ with $\text{dim}(A/I)\leq 1$, Theorem \ref{fs} ensures the existence of a local complete intersection ideal $J\subset A$ of height $2$ such that : 
   \begin{enumerate}
       \item[(a)] $\sqrt{I} = \sqrt{J}$,  and
       \item[(b)] $J/J^2$ is a free $A/J$-module of rank $2$.
       \end{enumerate}
     Since $J/J^2$ is free, by Lemma \ref{ext}, it follows that 
     ${\rm Ext}_A^1(J,A) \simeq A/J$. Moreover, we note that ${\rm hd}(J) \leq 1$.  Therefore, by Theorem \ref{serre-1}, there exists an exact sequence 
     \begin{equation}
     0\longrightarrow A\xrightarrow{\;\, f \;\,} P\xrightarrow{\;\,g\;\,} J\longrightarrow 0,   
     \end{equation}
where $P$ is a projective $A$-module of rank $2$. 

 Since $g$ is  $A$-linear, we can define the associated Koszul complex:
\begin{equation}
0\longrightarrow \wedge^2(P)\xrightarrow{\;\, h \;\,} P\xrightarrow{\;\,g\;\,} J\longrightarrow 0,
\end{equation}
 where the map $h$ is defined by 
 \begin{center}
 $h(p\wedge q)=g(p)q-g(q)p$, \,\,\,\,  for  $p, q \in P$,
 \end{center}
  and $g$ is as in (1).
  \medskip

{\bf Step-2:}
In this step, we show that the projective $A$-module $P$, obtained in Step 1,  has trivial determinant; that is, $\wedge^2(P)\simeq A$. 
 
  Let $\mathfrak{p}\subset A$ be a prime ideal  containing $I$.  Localizing the Koszul complex (2) at $\mathfrak{p}$, we obtain the following complex 
 \begin{equation}
 0\longrightarrow (\wedge^2(P))_\mathfrak{p} \xrightarrow{\;h_\mathfrak{p}\;} P_\mathfrak{p}\xrightarrow{\;g_{\mathfrak{p}}\;} J_{\mathfrak{p}}\longrightarrow 0 .
 \end{equation}
 
 Since $(\wedge^2(P))_\mathfrak{p}$ and $P_\mathfrak{p}$ are free $A_\mathfrak{p}$-modules of rank $1$ and $2$,  respectively,  we may  identify the complex (3) with  
 \begin{equation}
 0\longrightarrow A_{\mathfrak{p}} \xrightarrow{\;h_\mathfrak{p}\;} A_{\mathfrak{p}}^2\xrightarrow{\;g_{\mathfrak{p}}\;}  J_{\mathfrak{p}}\longrightarrow 0.
 \end{equation}
 Since $J$ is a local complete intersection of height $2$, there exists a regular sequence $\{a, b\}$ in $A_{\mathfrak{p}}$ such that  $J_{\mathfrak{p}}$ is generated by $a$ and $b$. 
  Let $\{e_1, e_2\}$ be the standard basis of $A_{\mathfrak{p}}^2$. Then  
$$h_{\mathfrak{p}}(x)=(-g_{\mathfrak{p}}(e_2)x, g_{\mathfrak{p}} (e_1)x).$$

Since $g_{\mathfrak{p}}$ is surjective, by Proposition \ref{regular}, we may further assume that  
$g_{\mathfrak{p}}(e_1)=a$\, and \, $g_{\mathfrak{p}}(e_2)=b$. Applying Lemma \ref{exact}, we conclude that the sequence  
$$ 0\longrightarrow A_{\mathfrak{p}} \xrightarrow{\;h_\mathfrak{p}\;}  A_{\mathfrak{p}}^2\xrightarrow{\;g_{\mathfrak{p}}\;}  J_{\mathfrak{p}}\longrightarrow 0$$
is exact. 

Since the Koszul complex (2) becomes exact after localization at every prime ideal, and exactness of a complex of finitely generated modules is a local property, the Koszul complex (2) is exact. 
Finally, combining (1) and (2), we deduce that  $$\wedge^2(P)\simeq {\rm ker}(P\rightarrow J)\simeq  A.$$ %This proves our claim. 
%\smallskip

{\bf Step-3:} 
Since $J/J^2$ is a free $A/J$-module of rank $2$, we may write 
$$J/J^2 \simeq \ol{A}\ol{c}\oplus \ol{A}\ol{d},$$ 
 where $c, d\in J$.  By Lemma \ref{ge}, $J$ is locally generated by $c$ and  $d$. It then follows from  Proposition \ref{regular} that $\{c,d\}$ forms a regular sequence locally. Consequently, $J/(c)$  is  locally free of rank one as an $A/(c)$-module. In other words, $J/(c)$ is a projective $A/(c)$-module of rank one. 
 
Tensoring the exact sequence (1) with $A/(c)$, we obtain   
$$A/(c) \xrightarrow{\;\, \widetilde h \;\,} P/cP\xrightarrow{\;\, \widetilde{g} \;\,} J/(c)\longrightarrow0 ,$$
where tilde denotes reduction modulo $(c)$.  

Since $J/(c)$ is a projective $A/(c)$-module , Proposition \ref{splits}    yields a decomposition $$P/cP\simeq  J/(c) \oplus  \text{im}(\widetilde h) .$$ Taking exterior powers, we  obtain 
  $$\wedge^2(P)\otimes A/(c)\simeq \wedge^2(P/cP)\simeq  J/(c) \otimes \text{im}(\widetilde h). $$ 
 Since $\wedge^2(P)\simeq A$ (shown in Step 2), it follows that
 $$A/(c)\simeq J/(c) \otimes \text{im}(\widetilde h),$$  and hence  $$J/(c) =  \text{im}(\widetilde h)^*.$$ 
 Now,  $\text{im}(\widetilde h)$ is a projective $A/(c)$-module of rank one and  generated by a single element. By \cite[Theorem 2.2.15]{ir}, we conclude that 
 $$\text{im}(\widetilde h)\simeq{A/(c)}.$$
 Therefore,  
 $$\text{im}(\widetilde h)^* = \text{Hom}_{A/(c)}(A/(c),A/(c)) \simeq{A/(c)},$$ 
 which implies that $J/(c)$ is cyclic.  Thus $J$ is generated by two elements. 
 
 Since $J$ is a local complete intersection ideal of height $2$,  Lemma \ref{fact-1} shows that  $J$ is in fact a complete intersection ideal. Hence  $I$ is a set-theoretic complete intersection.\qed
\end{proof}
\medskip

The next corollary  follows directly from Theorem \ref{main1}. 
\bc\label{set-coro}
Let $A$ be a ring of dimension $3$, and let $\mathcal{C}\subset {\rm Spec}(A)$ be a curve which is local complete intersection. Then $\mathcal{C}$ is a set-theoretic complete intersection.
\ec
\medskip

%The argument used in proving Theorem \ref{main1} can be pushed a bit further, leading to the following conclusion.

A closer look at the proof of Theorem \ref{main1} reveals that the same method yields the following conclusion.

\bt\label{main3}
Let $A$ be a ring of dimension $\geq 2$, and let $I\subset A$ be a local complete intersection ideal of height $2$ such that $I/I^2$ is a free $A/I$-module of rank $2$. Then $I$ is a complete intersection. 
\et
\smallskip

As direct consequences of Theorem \ref{main3},  we obtain the next two corollaries.

\bc\label{curve-ci}
Let $A$ be a ring of dimension $3$,  and let $\mathcal{C}\subset {\rm Spec}(A)$ be a local complete intersection curve with trivial conormal bundle. Then $\mathcal{C}$ is a  complete intersection.
\ec
\medskip

\bc\label{surface-ci}
Let $A$ be a ring of dimension $4$,   and let $\mathcal{S}\subset {\rm Spec}(A)$ be a local complete intersection surface with trivial conormal bundle.  Then $\mathcal{S}$ is a  complete intersection.
\ec
\medskip

Next, we turn our attention to local complete intersection surfaces inside an affine algebra of dimension $4$ over the algebraic closure of a finite field.

Let $p$ be a prime and $\ol{\mathbb{F}}_p$ be the algebraic closure of the finite field of $p$ elements.\\
A refinement of Ferrand's construction \cite{fe}  in codimension two, for affine algebras over $\ol{\mathbb{F}}_p$, is given in  \cite[Theorem 4.1]{d1}.

\begin{theorem}\label{fe}
Let $A$ be an affine algebra over $\ol{\mathbb{F}}_p$ and $I\subset A$ be a local complete intersection ideal of height $r\geq 2$ such that ${\rm dim}(A/I)\leq 2$. Then there exists an ideal  $J\subset A$ satisfying:
\begin{enumerate}
 \item $\sqrt{J}= \sqrt{I}$.
 \item $J$ is a local complete intersection ideal of height $r$.
 \item $J/J^2$ is a free $A/J$-module of rank $r$.
\end{enumerate}
\end{theorem}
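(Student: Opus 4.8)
The plan is to treat this as the codimension-two, one-dimension-higher analogue of the Ferrand--Szpiro theorem (Theorem \ref{fs}): one performs a Ferrand-type modification of $I$ within its radical class and exploits the arithmetic of $\ol{\mathbb{F}}_p$ to annihilate the obstruction that surfaces in the critical dimension. Write $\bar A=A/I$. Since $\dim(A/I)\le 2$ and $I$ is a local complete intersection of height $r$, the conormal module $N=I/I^2$ is projective of rank $r\ge 2$ over $\bar A$, and by Lemma \ref{ext} its determinant $\wedge^r N$ is identified, up to dual, with the dualizing module ${\rm Ext}^r_A(A/I,A)$. Conclusion (3) asks precisely that $N$ be free, so I would split the argument into (i) arranging that $\wedge^r N$ is free and (ii) upgrading ``trivial determinant'' to ``free'', while (1) and (2) are maintained by keeping track of radicals and heights throughout the construction.

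For (ii): when $r>2\ge\dim\bar A$ the rank exceeds the dimension, so Bass--Serre theory gives $N\simeq\bar A^{\,r-1}\oplus\wedge^r N$ and triviality of the determinant forces $N$ to be free. The delicate case is $r=\dim(A/I)=2$, where rank equals dimension and elementary cancellation no longer applies. Here I would invoke the hallmark property of affine algebras over $\ol{\mathbb{F}}_p$: a projective module whose rank equals the dimension and whose determinant is trivial is itself free, because its top Chern (Euler) class is controlled by the determinant and hence vanishes. This is the manifestation, in our setting, of the Bloch--Murthy--Szpiro principle that over $\ol{\mathbb{F}}_p=\bigcup_n\mathbb{F}_{p^n}$ the relevant zero-cycle obstruction degenerates, and it is the single step that genuinely consumes the hypothesis on the ground field.

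For (i): using Serre's construction (Theorem \ref{serre-1}) in the height-two case --- where ${\rm hd}_A(I)\le 1$ and the relevant ${\rm Ext}$ is cyclic --- and its higher-codimension analogues in general, one produces an exact sequence $0\to A\to P\to I\to 0$ with $P$ projective of rank $r$ and then extracts an ideal $J$ with $\sqrt J=\sqrt I$ whose conormal is built from $P$. The determinant of the conormal of $J$ can then be steered into the trivial class by choosing a generic section, a Bertini/transversality step that is legitimate over the infinite field $\ol{\mathbb{F}}_p$ and that cuts $\Spec(\bar A)$ in a suitable divisor. A height count shows that $J$ remains a local complete intersection ideal of height $r$ with the same radical as $I$, yielding (1) and (2); the construction is engineered so that (i) holds simultaneously.

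The main obstacle is step (ii) in the critical dimension $r=\dim(A/I)=2$: with rank equal to dimension one cannot peel off a free summand by cancellation, and the whole conclusion rests on the vanishing of the top obstruction class over $\ol{\mathbb{F}}_p$ --- this is exactly where the field hypothesis is indispensable. A secondary, more technical difficulty is the bookkeeping in step (i): one must guarantee that the modified ideal stays a height-$r$ local complete intersection with the correct radical while its determinant is simultaneously trivialized, but this follows the established Ferrand--Szpiro template rather than introducing a fundamentally new idea.
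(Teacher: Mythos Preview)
The paper does not prove this statement at all: it is quoted verbatim as \cite[Theorem 4.1]{d1} (Das), with no argument supplied, and is then used as a black box in Theorem \ref{main2}. So there is no ``paper's own proof'' to compare your proposal against.

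That said, your sketch has a genuine circularity in step (i). You propose to invoke Theorem \ref{serre-1} ``where \dots\ the relevant {\rm Ext} is cyclic'' in order to build $0\to A\to P\to I\to 0$ and then modify $I$ so that the determinant of the conormal becomes trivial. But by Lemma \ref{ext}, ${\rm Ext}^{r-1}_A(I,A)\simeq (\wedge^r(I/I^2))^{*}$, and this $A/I$-module is cyclic precisely when $\wedge^r(I/I^2)$ is already free---which is exactly the conclusion you are trying to arrange. Thus Serre's theorem cannot be the input here; it is the output once the determinant has been trivialized. The actual Ferrand--Szpiro mechanism (and Das's refinement) runs the other way: one chooses a quotient line bundle $I/I^2\twoheadrightarrow L$, sets $J$ via $J/I^2=\ker(I/I^2\to L)$, and computes that $\wedge^r(J/J^2)$ differs from $\wedge^r(I/I^2)$ by a power of $L$; the problem then becomes a divisibility/torsion question in ${\rm Pic}(A/I)$, and it is \emph{this} step---not a Bertini argument---where the hypothesis $k=\ol{\mathbb F}_p$ enters, via torsion of the relevant Chow/Picard groups in dimension $\le 2$.

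Your step (ii) is closer to the mark in spirit, but note that $A/J$ need not be regular, so the slogan ``rank $=$ dimension, trivial determinant $\Rightarrow$ free over $\ol{\mathbb F}_p$'' is not a one-line cancellation; it again rests on the same arithmetic input over $\ol{\mathbb F}_p$. In short: the two-stage outline (trivialize the determinant, then deduce freeness) is the right shape, but the engine in stage (i) must be the Ferrand doubling construction together with torsion in ${\rm Pic}$ over $\ol{\mathbb F}_p$, not Theorem \ref{serre-1}.
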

 \medskip
 
%By following the same method as in the proof of Theorem \ref{main1}, but applying Theorem \ref{fe} at the appropriate place, we obtain the following result.

We are now prepared to establish the result that extends \cite[Remark 1.16]{mu0} (see also \cite[Theorem 1.7]{bms}) to  four-dimensional affine algebras over $\ol{\mathbb{F}}_p$. 

\begin{theorem}\label{main2}
  Let $A$ be an affine algebra  of dimemsion $4$ over $\ol{\mathbb{F}}_p$. Let $I\subset A$ be a local complete intersection  ideal of height $2$. Then $I$ is a set-theoretic complete intersection. 
  \end{theorem}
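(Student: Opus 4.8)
The plan is to follow the architecture of the proof of Theorem \ref{main1} essentially line by line, replacing the single ingredient that is sensitive to the codimension of the singular locus. Since $\dd(A)=4$ and $\hh(I)=2$, we have $\dd(A/I)\leq 2$, so the classical Ferrand--Szpiro theorem (Theorem \ref{fs}), which requires $\dd(A/I)\leq 1$, is no longer available. This is precisely the place where the hypothesis that $A$ is an affine algebra over $\ol{\mathbb{F}}_p$ is used: it permits us to invoke the refined construction of Theorem \ref{fe} in its stead, which is valid up to $\dd(A/I)\leq 2$.

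First I would apply Theorem \ref{fe} with $r=2$ to produce a height-two local complete intersection ideal $J\subset A$ with $\sqrt{J}=\sqrt{I}$ and with $J/J^2$ free of rank $2$ over $A/J$. By Lemma \ref{ext}, the freeness of $J/J^2$ forces ${\rm Ext}^1_A(J,A)\simeq A/J$ to be cyclic, and, being a height-two local complete intersection ideal, $J$ satisfies ${\rm hd}_A(J)\leq 1$. Serre's theorem (Theorem \ref{serre-1}) then yields a short exact sequence
\[
0\lra A\xrightarrow{\;f\;}P\xrightarrow{\;g\;}J\lra 0
\]
with $P$ projective of rank $2$, together with its associated Koszul complex $0\to\wedge^2(P)\xrightarrow{h}P\xrightarrow{g}J\to 0$.

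The remaining two steps are purely formal and independent of the ambient dimension, so they transcribe verbatim from the proof of Theorem \ref{main1}. For the determinant computation I would localize the Koszul complex at an arbitrary prime $\p$: when $J\subset\p$, Proposition \ref{regular} allows me to take the local generators of $J_{\p}$ to be a regular sequence and Lemma \ref{exact} makes the localized complex exact, while when $J\not\subset\p$ one has $J_{\p}=A_{\p}$, the row $(g(e_1),g(e_2))$ is unimodular, and exactness is immediate. Since exactness of a complex of finitely generated modules is local, the global Koszul complex is exact; comparing it with the Serre sequence gives $\wedge^2(P)\simeq\ker(g)\simeq A$. Then, writing $J/J^2\simeq\ol{A}\,\ol{c}\op\ol{A}\,\ol{d}$, I would reduce the Serre sequence modulo $(c)$: since $\{c,d\}$ is locally a regular sequence (Proposition \ref{regular}), $J/(c)$ is projective of rank one, the reduced sequence splits (Proposition \ref{splits}), and taking $\wedge^2$ together with $\wedge^2(P)\simeq A$ forces $J/(c)$ to be cyclic. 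Hence $J$ is generated by two elements, so by Lemma \ref{fact-1} it is a complete intersection ideal; as $\sqrt{I}=\sqrt{J}$, the ideal $I$ is a set-theoretic complete intersection.

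The main obstacle is therefore entirely concentrated in the first step. Once Theorem \ref{fe} supplies an ideal $J$ with free conormal module and the same radical as $I$, no further arithmetic input is needed: Steps 2 and 3 rely only on the Koszul formalism, localization, and the elementary fact that a cyclic projective module of rank one is free, none of which sees the ambient dimension or the base field. Thus the full weight of the theorem rests on the availability of the $\ol{\mathbb{F}}_p$-version of Ferrand's construction for $\dd(A/I)\leq 2$, and the proof is in essence a verification that the argument of Theorem \ref{main1} is robust under this substitution.
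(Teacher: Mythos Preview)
Your proposal is correct and follows exactly the same approach as the paper: replace Theorem \ref{fs} by Theorem \ref{fe} and otherwise run the proof of Theorem \ref{main1} verbatim. In fact you give more detail than the paper does, including the easy case $J\not\subset\p$ in the local exactness check, which the original argument leaves implicit.
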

\proof
The argument proceeds along the same lines as the proof of Theorem \ref{main1}.  The only modification required is that, in the present situation, we apply Theorem \ref{fe} in place of Theorem \ref{fs}, which was invoked in the proof of Theorem \ref{main1}.
%However, in this case we invoke Theorem \ref{fe} in place of Theorem \ref{fs}, which was used in (\ref{main1}).
\qed
\medskip

\bc\label{surface-sci}
Let $A$ be an affine algebra  of dimemsion $4$ over $\ol{\mathbb{F}}_p$, and let $\mathcal{S}\subset {\rm Spec}(A)$ be a local complete intersection surface. Then $\mathcal{S}$ is a  set-theoretic complete intersection.
\ec
\bigskip

%\bt
%Let $A$ be an affine algebra  of dimemsion $4$ over $\ol{\mathbb{F}}_p$, and let $I\subset A$ be a local complete intersection ideal of height $2$ such that $I/I^2$ is a free $A/I$-module of rank $2$. Then $I$ is a complete intersection. 
%\et
%\medskip

 \noindent
{\bf Acknowledgement :}
The first named author (Reference number 231610213879)  acknowledges University Grants Commission (UGC) for their support.

\end{document}